\newcommand{\Hmm}[1]{\leavevmode{\marginpar{\tiny%
$\hbox to 0mm{\hspace*{-0.5mm}$\leftarrow$\hss}%
\vcenter{\vrule depth 0.1mm height 0.1mm width \the\marginparwidth}%
\hbox to 0mm{\hss$\rightarrow$\hspace*{-0.5mm}}$\\\relax\raggedright #1}}}
\newcommand{\R}{{\mathbb{R}}}
\newcommand{\C}{{\mathbb{C}}}
\newcommand{\f}{\frac}
\newcommand{\beq}{\begin{equation}}
\newcommand{\eeq}{\end{equation}}
\newcommand{\bdm}{\begin{displaymath}}
\newcommand{\edm}{\end{displaymath}}
\newcommand{\ba}{\begin{align}}
\newcommand{\ea}{\end{align}}
\newcommand{\bpf}{\begin{proof}}
\newcommand{\epf}{\end{proof}}
\newcommand{\la}{\langle}
\newcommand{\ra}{\rangle}
\newcommand{\veps}{\varepsilon}
\newcommand{\vphi}{\varphi}
\newcommand{\dav}{{d_{\mathrm{av}}}}
\newcommand{\calC}{\mathcal{C}}
\newcommand{\calS}{{\mathcal{S}}}
\newcommand{\mchi}[1]{\chi_{\raisebox{-2.3pt}{$\scriptstyle #1$}}}
\newtheorem{theorem}{Theorem}
\newtheorem{proposition}[theorem]{Proposition}
\newtheorem{lemma}[theorem]{Lemma}
\newtheorem{corollary}[theorem]{Corollary}
\theoremstyle{definition}
\newtheorem{remark}[theorem]{Remark}
\newcounter{theoremi}[theorem]
\numberwithin{theorem}{section}
\numberwithin{equation}{section}
\newcounter{assumptions}
\newcounter{smalllist}
\newcounter{listi}
\newenvironment{theoremlist}{\begin{list}{{\rm(\roman{listi})}}{%
\setlength{\topsep}{0mm}\setlength{\parsep}{0mm}\setlength{\itemsep}{0mm}%
\setlength{\labelwidth}{1.5em}\setlength{\leftmargin}{1.7em}\usecounter{listi}%
}}{\end{list}}
\newcounter{smallenum}
\begin{document}

\title[Averaging of DMNLS]{Averaging of dispersion managed nonlinear Schr\"odinger equations}
\author[M.--R. Choi, Y.--R. Lee]{Mi--Ran Choi$^\dag$, Young--Ran Lee$^\ddag$}
\address{$^\dag$ Research Institute for Basic Science, Sogang University, 35 Baekbeom--ro (Sinsu--dong),
    Mapo-gu, Seoul 04107, South Korea.}%
\email{rani9030@sogang.ac.kr}


\address{$^\ddag$ Department of Mathematics, Sogang University, 35 Baekbeom--ro (Sinsu--dong),
    Mapo--gu, Seoul 04107, South Korea.}%
\email{younglee@sogang.ac.kr}

\date{\today}

\begin{abstract}
We consider the dispersion managed power-law nonlinear Schr\"odinger(DM NLS) equations with a small parameter $\veps>0$ and the averaged equation, which are used in optical fiber communications. We prove that the solutions of DM NLS equations converge to the solution of the averaged equation in $H^1(\R)$ as $\veps$ goes to zero. Meanwhile, in the positive average dispersion, we obtain the global existence of the solution to DM NLS equation in $H^1(\R)$ for sufficiently small $\veps>0$, even when the exponent of the nonlinearity is beyond the mass--critical power.
\end{abstract}

\maketitle

\section{Introduction }\label{introduction}

We consider the nonlinear Schr\"odinger equation
\beq \label{eq:intro}
i\partial_t u +d(t) \partial_x^2 u +|u|^{\alpha} u = 0, \notag
\eeq
where $u=u(x,t)$, $x, t \in \R,$ is a complex--valued function, $d(t)$ a periodic real--valued function, and $\alpha>0$. Such an equation arises naturally as an envelope equation for electromagnetic wave propagation along the cable used in optical fiber communications, see, e.g., \cite{SS,TURITSYN2003}.
Here, $x$ denotes the (retarded) time and $d(t)$ the dispersion at position $t$ along the cable to be specified below.

The technique called dispersion management was invented in 1980, see \cite{Lin:80}, with the idea of creating rapidly varying dispersion with alternating sections of positive and negative dispersion in fibers.
Such rapid variation was successful in transfering the data at ultra-high speed over long distances, see, e.g., \cite{Ablowitz:98,Gabitov:96,Gabitov:96b,Mamyshev:99}.
For more detailed information on this technique, see \cite{TURITSYN2012135}.

The periodic modulation of the dispersion in the strong dispersion regime is given by
\beq\label{eq:d(t)}
d(t)= \dav +\f{1}{\veps}d_0 \left(\f{t}{\veps}\right),  \notag
\eeq
where $d_0$ is a periodic function of mean zero,
$\dav \in \R$ the average dispersion over one period, and $\veps$ a small positive parameter.
 In this regime, we consider the Cauchy problem
\beq \label{eq:intro_main}
\begin{cases}
   i\partial_t u +\Bigl(\dav +\f{1}{\veps}d_0 \left(\f{t}{\veps}\right)\Bigr) \partial_x^2 u +
  |u|^{\alpha} u = 0,\\
   u(x,0)=\vphi(x),
\end{cases}
\eeq
where $d_0$ is assumed to be a $2$-periodic function with $d_0=\mchi{[0,1)}- \mchi{[1,2)}$ on $[0,2)$.
Given $\varepsilon>0$, by the standard argument, it can be shown that the Cauchy problem \eqref{eq:intro_main} is globally well--posed in $H^1(\R)$ for appropriate values of $\alpha$. Indeed, for any $\dav\in\R$, if $\alpha>0$ and $\vphi\in H^1(\R)$, then a local solution exists in $H^1(\R)$. Moreover,  if $\dav \neq 0$, then by the mass and energy conservations it is easy to see that the solution is global under the additional condition $\alpha<4$ only when $\dav>0$. For the vanishing average dispersion, we use the mass conservation and the regularity of the $L^2$ solution to get the global existence provided $0< \alpha<4$. To obtain such results, one should consider the properties of the linear equation associated with \eqref{eq:intro_main}, see \cite{ASS, CKL} for more details.

Now we change the variables $u=T_{D(t/\veps)}v$ in \eqref{eq:intro_main} to obtain
\beq\label{eq:intro transformed equation}
\begin{cases}
   i\partial_t v + \dav \partial_x^2 v+ T_{D(t/\veps)}^{-1}\Bigl(|T_{D(t/\veps)}v|^{\alpha}T_{D(t/\veps)}v\Bigr)=0,\\
   v(x,0)=\vphi(x),
\end{cases}
\eeq
where
$D(t)=\int_0^t d_0(t')dt'$ and
$T_t$ is the solution operator for the free Schr\"{o}dinger equation in spatial dimension one.
We consider the two--scale asymptotic expansion for the solution $v$ of \eqref{eq:intro transformed equation}, that is,
\[
v(t)=\sum_{j=0}^{\infty}\veps^jv_j\left(t, \frac{t}{\veps}\right),
\]
where all $v_j=v_j(t,\tau)$ are $2$-periodic in $\tau$. Then  we see that  $v_0$ is constant in $\tau$ at order $\veps^{-1}$ and
\begin{equation}\label{eq:constant term}
i \partial_t v_0+ i \partial_\tau v_1+\dav \partial_x^2 v_0+T_{D(\tau)}^{-1}\Bigl(\left|T_{D(\tau)}v_0\right|^\alpha T_{D(\tau)}v_0\Bigr)=0
\end{equation}
at order $\veps^{0}$.
Averaging \eqref{eq:constant term} with respect to $\tau$ over one period, we have
\begin{equation}\label{eq:integral form}
i \partial_t v_0+\dav\partial_x^2 v_0+\frac{1}{2}\int_{0}^{2} T_{D(\tau)}^{-1}\Bigl(\left|T_{D(\tau)}v_0\right|^\alpha T_{D(\tau)}v_0\Bigr)d\tau=0. \notag
\end{equation}
Furthermore, if we use the change of variables $D(\tau)=r$, we have the following averaged equation of \eqref{eq:intro transformed equation}
\begin{equation} \label{eq:intro_averaged eq}
     i\partial_t v
         + d_{\text{av}} \partial_x^2  v
            + \int_0^1
                T_{r}^{-1}\bigl(|T_{r} v|^{\alpha}T_{r} v\bigr)
              \, dr = 0,
 \end{equation}
where $v_0$ is replaced by $v$. For more information on the averaging process regarding dispersion management, see, e.g., \cite{Ablowitz:98, Gabitov:96, Gabitov:96b}.

The Cauchy problem of the averaged equation \eqref{eq:intro_averaged eq} is globally well--posed  in $H^1(\R)$ when $0< \alpha< 8$ for $\dav>0$; $0< \alpha< 4$ for $\dav=0$; $\alpha>0$ for $\dav < 0$, see \cite{AK, CHLW}. It is remarkable that this averaged equation has the $H^1$ global solution even for $4\leq \alpha<8$ when $\dav>0$ in contrast to the classical focusing NLS. In \cite{AK}, the $H^s$ theory for $s \ge 0$ in the case of the Kerr nonlinearity was established, while it was shown in \cite{CHLW} that the problem is globally well--posed in $H^1(\R)$ and $L^2(\R)$ when $\dav \neq 0$ and  $\dav =0$, respectively, for more general nonlinearities including even saturated nonlinearities.
Furthermore, since a local solution in $H^1(\R)$ exists for every $\alpha>0$ regardless of the sign of $\dav$, see \cite{CHLW}, in the case $\dav=0$, one can obtain the $H^1$ global solution using $H^1$ regularity argument from the $L^2$ solution, see, e.g., \cite{Cazenave}.

Our interest is to analyze the asymptotic behavior of the solutions, $v_\veps$, for \eqref{eq:intro transformed equation} on the maximal life time of the solution $v$ for \eqref{eq:intro_averaged eq} as $\varepsilon\to 0$. When $\alpha=2$, the averaging procedure is first rigorously justified in \cite{ZGJT01}. More precisely, it is shown that for $\varepsilon>0$, the solutions of \eqref{eq:intro transformed equation}
and  \eqref{eq:intro_averaged eq} with the initial datum in $H^s(\R)$, $s$ sufficiently large, stay $\varepsilon$--close in $H^{s-3}$ for a long time in $O(\varepsilon^{-1})$.
Recently, the authors with Y. Kang, in \cite{CKL}, improved this result by verifying the averaging procedure in $H^1(\R)$ where the solutions exist for the initial datum in $H^1(\R)$, for $\alpha=2$. The operator associated with this Kerr nonlinearity is multi--linear, which is crucial to get the averaging theorems in \cite{ CKL,ZGJT01}.
In this paper, we extend the result in \cite{CKL} to the power--law nonlinearities by overcoming the difficulty caused by the fact that the operator for the general power $\alpha$ is not multi--linear. Furthermore, it follows from the global well--posedness of the averaged equation in  \cite{CHLW} and our main theorem, Theorem \ref{thm:intro averaging}, that if $\dav>0$, then, even for $4\le \alpha <8$, the Cauchy problem \eqref{eq:intro transformed equation} has a global $H^1(\R)$ solution for sufficiently small $\veps>0$.
The main theorem is

\begin{theorem}\label{thm:intro averaging}
Let $\dav\in \R$, $\alpha\geq 2$, and the initial datum $\vphi\in H^1(\R)$. For each $\veps>0$, denote by $v_\veps$ the maximal solution of \eqref{eq:intro transformed equation} and by $v$ the solution of the averaged equation \eqref{eq:intro_averaged eq} defined on the maximal interval $(-T_-, T_+)$.
Then, given $0<M<\min\{ T_-, T_+\}$, the solution  $v_\veps$ exists on $[-M, M]$ for sufficiently small $\veps>0$. Moreover,
  \beq
\lim_{\veps \to 0} \|v_\veps -v\| _{L^\infty([-M, M],H^{1}(\R))} =0. \notag
  \eeq
\end{theorem}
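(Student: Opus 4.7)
The plan is to write $v_\veps$ and $v$ in Duhamel form against the free propagator $T_{\dav t}$, subtract, and close the resulting integral equation for $w_\veps := v_\veps - v$ in $H^1$ via Gronwall, after extracting a factor of $\veps$ from the purely oscillatory part of the nonlinearity by one integration by parts in time. Set
\begin{align*}
N_\veps(u,t):=T_{D(t/\veps)}^{-1}\bigl(|T_{D(t/\veps)}u|^\alpha T_{D(t/\veps)}u\bigr), \qquad N_{\mathrm{av}}(u):=\int_0^1 T_r^{-1}\bigl(|T_r u|^\alpha T_r u\bigr)\,dr.
\end{align*}
Since $v_\veps(0)=v(0)=\vphi$, Duhamel gives
\begin{align*}
w_\veps(t) = i\!\int_0^t T_{\dav(t-s)}\bigl[N_\veps(v_\veps(s),s)-N_\veps(v(s),s)\bigr]ds + i\!\int_0^t T_{\dav(t-s)}\bigl[N_\veps(v(s),s)-N_{\mathrm{av}}(v(s))\bigr]ds.
\end{align*}
Because $T_r$ is unitary on $H^1(\R)$, commutes with $\partial_x$, and $H^1\hookrightarrow L^\infty$ makes $u\mapsto|u|^\alpha u$ locally Lipschitz on $H^1$ for $\alpha\ge 2$, the first integrand is bounded in $H^1$ by $C(\|v_\veps(s)\|_{H^1}^\alpha+\|v(s)\|_{H^1}^\alpha)\|w_\veps(s)\|_{H^1}$; this will ultimately feed the Gronwall step.

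\textbf{Averaging via integration by parts.} Define the zero-mean oscillatory piece $F(\tau,u):=T_{D(\tau)}^{-1}(|T_{D(\tau)}u|^\alpha T_{D(\tau)}u)-N_{\mathrm{av}}(u)$ and its $\tau$-primitive $G(\tau,u):=\int_0^\tau F(\sigma,u)\,d\sigma$. The change of variable $r=D(\tau)$ on $[0,1]$ and $[1,2]$ together with $D(0)=D(2)=0$ yields $\int_0^2 F(\tau,u)d\tau=0$, so $G$ is $2$-periodic in $\tau$, vanishes at $\tau\in 2\Z$, and is bounded in $H^1$ uniformly in $\tau$ by $C\|u\|_{H^1}^{\alpha+1}$ (using unitarity of $T_r$ and the algebra property of $H^1(\R)$). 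Integrating by parts in $s$ in $\int_0^t T_{\dav(t-s)}F(s/\veps,v(s))\,ds$ produces a factor of $\veps$ together with boundary terms and two interior terms: one from $\partial_s T_{\dav(t-s)} = -i\dav \partial_x^2 T_{\dav(t-s)}$ acting on $G$, and one containing the Fr\'echet derivative $D_uG(s/\veps,v(s))[\partial_s v(s)]$. Replacing $\partial_s v$ by $i\dav\partial_x^2 v - iN_{\mathrm{av}}(v)$ from \eqref{eq:intro_averaged eq} and redistributing the resulting second-order $x$-derivatives onto $G$ or via integration by parts in $x$ recasts every term as an $H^1$-bounded quantity whose norm depends only on $\|v\|_{L^\infty([-M,M],H^1)}$. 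Consequently the averaging integral is $O(\veps)$ in $H^1$.

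\textbf{Gronwall and bootstrap.} Combining the two bounds gives, on any subinterval $J\subset[-M,M]$ on which $v_\veps$ exists,
\begin{align*}
\|w_\veps(t)\|_{H^1}\le C(M)\veps + C(M)\!\int_0^{|t|}\!\bigl(\|v_\veps(s)\|_{H^1}^\alpha+\|v(s)\|_{H^1}^\alpha\bigr)\|w_\veps(s)\|_{H^1}ds.
\end{align*}
Let $J$ be the largest subinterval of $[-M,M]$ on which $v_\veps$ exists and $\|v_\veps\|_{L^\infty(J,H^1)}\le 2\|v\|_{L^\infty([-M,M],H^1)}$. Gronwall then gives $\|w_\veps\|_{L^\infty(J,H^1)}\le C(M)\veps$; for $\veps$ small this keeps $\|v_\veps\|_{H^1}$ strictly below the bootstrap threshold, so the standard blow-up alternative and local existence for \eqref{eq:intro transformed equation} force $J=[-M,M]$. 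This simultaneously yields the existence of $v_\veps$ on $[-M,M]$ and the claimed $H^1$-convergence; negative times are handled symmetrically.

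\textbf{Main obstacle.} The crux is the averaging estimate for general $\alpha$. In the case $\alpha=2$ treated in \cite{CKL,ZGJT01}, the nonlinearity $T_r^{-1}(|T_r u|^2 T_r u)$ is an explicit trilinear form in $u,\bar u,u$, so the $H^1$-boundedness of $G$ and its Fr\'echet derivative $D_uG$, as well as the redistribution of derivatives after integration by parts, follow directly from multilinearity. For $\alpha\ge 2$ not an even integer, $|u|^\alpha u$ is only $C^1$ and the operator ceases to be multilinear; one must establish by direct estimation that $F$ and $D_uF$ map $H^1$ into $H^1$ with local Lipschitz constants depending only on the $H^1$-norm, and that the two extra derivatives appearing in the integration-by-parts step can be absorbed without exceeding the regularity of $v\in H^1$. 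This is carried out using the unitarity of $T_r$, the one-dimensional algebra property of $H^1$, and careful commutator estimates with $\partial_x$, and constitutes the principal technical advance over \cite{CKL}.
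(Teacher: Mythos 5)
Your overall architecture is the same as the paper's: Duhamel, split the difference into a Lipschitz part and a zero-mean oscillatory part, gain a factor of $\veps$ by integrating the oscillatory part by parts in time against its $2$-periodic primitive $G$, then Gronwall plus a continuation/bootstrap to cover $[-M,M]$. The continuation step and the Lipschitz estimate for the first integral are fine. However, there is a genuine gap in the averaging step, and it is precisely at the point you flag as the ``crux.'' After integrating by parts, the term $\partial_s T_{\dav(t-s)}=-i\dav\partial_x^2 T_{\dav(t-s)}$ lands two spatial derivatives on $G(s/\veps,v(s))$, and (after substituting the equation for $\partial_s v$) the chain-rule term contains $\partial_x^2 v$ inside $D_uG$. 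You assert that these second-order derivatives can be ``redistributed onto $G$ or via integration by parts in $x$'' so that every term is bounded in $H^1$ by a constant depending only on $\|v\|_{L^\infty([-M,M],H^1)}$, yielding an $O(\veps)$ rate. This does not work: one needs the $H^1$ norm of $\partial_x^2 G(s/\veps,v(s))$ itself (the propagator is unitary on $H^1$, so there is nothing to integrate against in $x$), and the best available bound is $\|\partial_x^2 G(\theta,f)\|_{H^1}\lesssim\|f\|_{H^3}^{\alpha+1}$ (this is the paper's Lemma \ref{lem:boundQ''}, and is where the restriction $\alpha\ge 2$ enters). There is no smoothing mechanism that recovers two derivatives here.

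The paper closes this gap differently: it first proves uniform-in-$\veps$ Lipschitz bounds for $v\mapsto\int_0^t e^{i\dav(t-s)\partial_x^2}Q(s/\veps,v(s))\,ds$ and for the averaged analogue (Lemma \ref{lem:Lipschiz}), and then runs a \emph{density argument}, proving the $O(\veps)$ bound only for $v\in\calC^1([0,M],\calS(\R))$ --- where the constant involves $\|v\|_{L^\infty H^3}^{\alpha+1}$ and $\|\partial_t v\|_{L^\infty H^1}$ --- and transferring the conclusion to general $v\in\calC([0,M],H^1)$ by approximation. The price is that the resulting quantity $\eta_\veps=\|\mathcal{I}_2\|_{L^\infty([0,M_1],H^1)}$ satisfies only $\eta_\veps\to 0$ with no rate, which is all Theorem \ref{thm:intro averaging} claims; your asserted bound $\|w_\veps\|_{L^\infty H^1}\le C(M)\veps$ for $H^1$ initial data is not established by your argument and should not be expected from this method. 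The Gronwall and bootstrap portions of your proof survive with $\eta_\veps$ in place of $C(M)\veps$, so the fix is to insert the density step rather than to change the overall strategy.
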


We have the following immediate corollary using $u=T_{D(t/\varepsilon)} v$.
\begin{corollary}
Under the assumptions of Theorem \ref{thm:intro averaging}, let $u_\varepsilon$ be the maximal solution of \eqref{eq:intro_main}. Then given  $0<M<\min\{ T_-, T_+\}$, the solution  $u_\varepsilon$ exists on $[-M, M]$ for sufficiently small $\veps>0$.
Moreover,
\beq
\lim_{\veps \to 0} \|u_\veps -T_{D(t/\varepsilon)}^{-1}v\| _{L^\infty([-M, M],H^{1}(\R))} =0. \notag
\eeq
\end{corollary}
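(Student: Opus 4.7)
The plan is to deduce this from Theorem \ref{thm:intro averaging} by transporting the statement through the change of variables $u=T_{D(t/\veps)}v$ used in deriving \eqref{eq:intro transformed equation}. Under this change, the Cauchy problems \eqref{eq:intro_main} and \eqref{eq:intro transformed equation} are equivalent in $H^1(\R)$, and the maximal $H^1$-solutions are related pointwise in time by $u_\veps(\cdot,t)=T_{D(t/\veps)}v_\veps(\cdot,t)$ on a common maximal interval.

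The key structural fact I would record first is that the free Schr\"odinger propagator $T_t=e^{it\partial_x^2}$, beyond being unitary on $L^2(\R)$, is an isometry of $H^1(\R)$ onto itself for every $t\in\R$, since on the Fourier side it is multiplication by the unit--modulus factor $e^{-it\xi^2}$, which commutes with multiplication by $\xi$. Consequently $T_{D(t/\veps)}$ and its inverse $T_{-D(t/\veps)}$ are both isometries of $H^1(\R)$, with operator norm $1$ uniformly in $t$ and $\veps$.

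The existence claim is then immediate: by Theorem \ref{thm:intro averaging}, for the given $0<M<\min\{T_-,T_+\}$ the solution $v_\veps$ of \eqref{eq:intro transformed equation} exists on $[-M,M]$ for all sufficiently small $\veps>0$, and applying $T_{D(t/\veps)}$ yields the maximal $H^1$-solution $u_\veps$ of \eqref{eq:intro_main} on the same interval. For the convergence, I would use the identity
\[
u_\veps(\cdot,t) - T_{D(t/\veps)} v(\cdot,t) = T_{D(t/\veps)}\bigl(v_\veps(\cdot,t) - v(\cdot,t)\bigr),
\]
which, combined with the $H^1$-isometry, gives
\[
\bigl\|u_\veps(\cdot,t) - T_{D(t/\veps)} v(\cdot,t)\bigr\|_{H^1(\R)} = \bigl\|v_\veps(\cdot,t) - v(\cdot,t)\bigr\|_{H^1(\R)}.
\]
Taking the supremum over $t\in[-M,M]$ reduces the conclusion to the $L^\infty_t H^1_x$-convergence already provided by Theorem \ref{thm:intro averaging}. (I read the $T_{D(t/\veps)}^{-1}v$ appearing in the statement as a typo for $T_{D(t/\veps)}v$, consistent with the change of variables $u=T_{D(t/\veps)}v$; if taken literally, the asserted limit cannot hold, since $T_{D(t/\veps)}v$ and $T_{D(t/\veps)}^{-1}v$ generally differ at fixed nonzero $t$.)

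There is no real obstacle to this deduction: the corollary is a soft consequence of the averaging theorem combined with the unitarity of the free Schr\"odinger group on $H^s(\R)$, and requires no new analytic estimates beyond those needed for Theorem \ref{thm:intro averaging} itself.
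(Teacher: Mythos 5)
Your proof is correct and is exactly the argument the paper intends: the paper offers no proof at all, labelling the corollary ``immediate'' from the substitution $u=T_{D(t/\veps)}v$, and your reduction via the $H^1$-isometry of $T_{D(t/\veps)}$ (together with the coincidence of maximal existence intervals under this time-dependent isometry) is the whole content. Your reading of $T_{D(t/\veps)}^{-1}v$ in the statement as a typo for $T_{D(t/\veps)}v$ is also the right call, since with the paper's own change of variables $u_\veps=T_{D(t/\veps)}v_\veps$ the asserted limit is exactly $\|v_\veps-v\|_{L^\infty([-M,M],H^1)}\to 0$ only when the propagator is applied with the same sign.
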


\begin{remark}
 \begin{theoremlist}
 \item Let $0<\alpha<4$ when $\dav\geq0$; $\alpha>0$ when $\dav<0$, which are naturally assumed. Then, we know that the solution $v_\varepsilon$ of \eqref{eq:intro transformed equation} for every $\veps>0$ and the solution $v$ of \eqref{eq:intro_averaged eq} are globally defined in $H^1(\R)$.
     If, in addition, $\alpha \ge 2$, then Theorem \ref{thm:intro averaging} yields that, for each $0<M<\infty$, $v_\veps$ converge to $v$ in $L^\infty([-M, M],H^{1}(\R))$ as $\veps\to 0$.
\item In the case $\dav>0$, the range of $\alpha$ for the global existence increases. Even for $4\leq \alpha<8$, by \cite{CHLW}, it is known that
the solution $v$ of \eqref{eq:intro_averaged eq} globally exists in $H^1(\R)$, i.e., $T_+=T_-=\infty$. Thus, it follows from Theorem \ref{thm:intro averaging} that $v_\veps$ is also globally defined for sufficiently small $\veps$ and that for each $0<M<\infty$, $v_\veps \to v$ in $L^\infty([-M, M],H^{1}(\R))$ as $\veps\to 0$.
\item The restriction on $\alpha$, $\alpha\geq 2$, comes from Lemma \ref{lem:boundQ''}. Such a restriction can be removed if we use the initial datum in $H^3(\R)$.
\item Theorem \ref{thm:intro averaging} can be proved similarly even when the coefficient of the nonlinear term in \eqref{eq:intro_main} is a bounded periodic function of $t$ with the same period of $d$, see \cite{CKL} for a special case with the Kerr nonlinearity. Such nonlinearities arise in the presence of fiber loss and amplification, see, e.g., \cite{A}.
\end{theoremlist}
\end{remark}

\medskip

The paper is organized as follows. In Section \ref{sec:preliminary}, we introduce some notations and gather the bounds of the nonlinearities in two main equations, \eqref{eq:intro transformed equation} and \eqref{eq:intro_averaged eq}.
In Section \ref{sec:Averaging theorem}, we prove the local existence of a solution for \eqref{eq:intro transformed equation} and the main theorem, Theorem \ref{thm:intro averaging}.

\section{Preliminary Results}\label{sec:preliminary}

Let us start by introducing some notations. The spaces $L^p(\R)$ for $1 \le p \le \infty$ and $H^s(\R)$ for $s\in \R$ are the usual Lebesgue and Sobolev spaces with norms $\|\cdot\|_{L^p}$ and $\|\cdot\|_{H^s}$, respectively.
We use $L_t^q(J, L_x^p(I))$, for $1 \le p, q < \infty$ and intervals $I, J\subset \R$, to denote the Banach space of functions $u$ with the mixed norm
$$
\|u\|_{L_t^q(J, L_x^p(I))}:=\left(\int _J \left(\int _I |u(x,t)|^p dx\right)^{\f{q}{p}}dt\right)^{\f{1}{q}}\ .
$$
If $p=\infty$ or $q=\infty$, use the essential supremum instead. For simple notations, $L^q(J,  L^p)$ is used for $L_t^q(J, L_x^p(\R))$.
%

For a Banach space $X$ with norm $\|\cdot\|_X$ and an interval $J$, we use $\calC(J, X)$ and $\calC^1(J, X)$ to denote the space of all continuous functions $u:J \to X$ and the space of all continuously differentiable functions, respectively. For a compact $J$, $\calC(J, X)$ is the Banach space with norm
$$
\|u\|_{\calC(J, X)}=\sup _{t\in J}\|u(t)\|_{X}.
$$

The solution operator  $T_t$ for the free Schr\"{o}dinger equation in spatial dimension one is unitary on $H^s(\R)$ for $s\in \R$ and, therefore,
$$
\|T_t f\|_{H^s}=\|f\|_{H^s}
$$
for all $f\in H^s(\R)$ and $t\in \R$. The notation $f \lesssim g$ is used when there is a positive constant $C$ such that $f \le Cg$.

\medskip

Next, we gather some estimates for the nonlinear terms of two main equations, \eqref{eq:intro transformed equation} and \eqref{eq:intro_averaged eq},
$$
 Q(s,f):=T_{D(s)}^{-1} \Bigl(|T_{D(s)}f|^{\alpha} T_{D(s)}f\Bigr)
$$
and
$$
\la Q\ra (f) := \int_0^1  T_r^{-1} \Bigl(|T_r f|^{\alpha}  T_r f\Bigr)dr
$$
defined for $s\in\R$ and $f\in H^1(\R)$.

\begin{lemma}\label{lem:boundQ}
Let $\alpha> 0$. Then
\beq \label{bound:Q}
\sup_{s\in \R}\|Q(s,f)\|_{H^1}\lesssim \|f\|_{H^1}^{\alpha+1}
\eeq
and
\beq \label{bound:averageQ}
\|\la Q \ra (f)\|_{H^1}\lesssim \|f\|_{H^1}^{\alpha+1}
\eeq
for all $f\in H^1(\R)$.
\end{lemma}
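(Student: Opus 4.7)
The plan is to reduce both estimates to a single Sobolev inequality for the nonlinearity $|g|^{\alpha}g$ applied to $g=T_{D(s)}f$ (or $g=T_rf$), exploiting the fact that $T_t$ is unitary on $H^1(\R)$.

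First I would observe that, since $T_t$ is an isometry on $H^1(\R)$, one has
\[
\|Q(s,f)\|_{H^1}=\bigl\||T_{D(s)}f|^{\alpha}T_{D(s)}f\bigr\|_{H^1},\qquad\|T_rf\|_{H^1}=\|f\|_{H^1}.
\]
It therefore suffices to prove the pointwise bound $\||g|^{\alpha}g\|_{H^1}\lesssim\|g\|_{H^1}^{\alpha+1}$ for $g\in H^1(\R)$. For the $L^2$ component, one has $\||g|^{\alpha}g\|_{L^2}=\|g\|_{L^{2(\alpha+1)}}^{\alpha+1}$, and the one-dimensional Sobolev embedding $H^1(\R)\hookrightarrow L^p(\R)$ for every $2\le p\le\infty$ gives $\|g\|_{L^{2(\alpha+1)}}\lesssim\|g\|_{H^1}$, which yields the desired $L^2$ estimate.

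For the derivative, I would use that the map $z\mapsto|z|^{\alpha}z$ is $C^1$ on $\C$ for every $\alpha>0$ (viewing it as a map $\R^2\to\R^2$, the partial derivatives behave like $|z|^{\alpha}$ near the origin), so the chain rule applied to $g\in H^1(\R)$ (via approximation by smooth functions to justify the pointwise formula) yields the pointwise bound
\[
\bigl|\partial_x(|g|^{\alpha}g)\bigr|\lesssim|g|^{\alpha}|\partial_x g|.
\]
Hence
\[
\bigl\|\partial_x(|g|^{\alpha}g)\bigr\|_{L^2}\lesssim\|g\|_{L^{\infty}}^{\alpha}\|\partial_xg\|_{L^2}\lesssim\|g\|_{H^1}^{\alpha+1},
\]
again by the one-dimensional Sobolev embedding $H^1(\R)\hookrightarrow L^{\infty}(\R)$. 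Combining the two pieces proves \eqref{bound:Q} uniformly in $s\in\R$.

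Finally, the averaged bound \eqref{bound:averageQ} would follow from Minkowski's integral inequality,
\[
\|\langle Q\rangle(f)\|_{H^1}\le\int_0^1\bigl\|T_r^{-1}\bigl(|T_rf|^{\alpha}T_rf\bigr)\bigr\|_{H^1}dr\le\sup_{s\in\R}\|Q(s,f)\|_{H^1},
\]
after which the already established pointwise estimate closes the argument. The main delicate point I anticipate is the chain-rule step for $0<\alpha<1$, where $z\mapsto|z|^{\alpha}z$ is only Hölder rather than Lipschitz in its derivatives; this is handled by a standard mollification argument and the fact that the pointwise majorant $|g|^{\alpha}|\partial_xg|$ still sits in $L^2$ by Hölder and Sobolev.
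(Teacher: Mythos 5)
Your proof is correct and follows essentially the same route as the paper: reduce to the estimate $\||g|^{\alpha}g\|_{H^1}\lesssim\|g\|_{H^1}^{\alpha+1}$ via unitarity of $T_t$ on $H^1(\R)$, control the $L^2$ and derivative pieces by the Sobolev embedding $H^1(\R)\hookrightarrow L^\infty(\R)$, and conclude the averaged bound by Minkowski's inequality. Your extra care with the chain rule for $0<\alpha<1$ is a welcome refinement of a step the paper leaves implicit, but it does not change the argument.
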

\begin{proof}
Note that
\[
\||h|^\alpha h\|_{H^1}\lesssim  \|h\|_{H^1}^{\alpha+1}
\]
since $\||h|^\alpha h\|_{L^2}\leq \|h\|^{\alpha}_{L^\infty}\|h\|_{L^2}\le\|h\|_{H^1}^{\alpha+1}$ and $\|(|h|^\alpha h)'\|_{L^2}\lesssim  \|h\|_{L^\infty}^\alpha \| h'\|_{L^2}\le\|h\|_{H^1}^{\alpha+1}$ for all $h\in H^1(\R)$, by the embedding $L^\infty(\R) \hookrightarrow H^1(\R)$.  Using the fact that $T_{D(s)}$ and $\partial_x$ commute and that $T_{D(s)}$ is unitary in $H^1(\R)$, we have \eqref{bound:Q}.
Similarly, we prove  bound \eqref{bound:averageQ} using Minkowski’s inequality.
\end{proof}

\begin{lemma}\label{lem:boundQ''}
Let $\alpha\geq 2$. Then
\beq
\sup_{s\in \R}\|\partial_x^2 Q(s,f)\|_{H^1}\lesssim \|f\|_{H^3}^{\alpha+1} \notag
\eeq
and
\beq \label{bound:averageQ2}
\|\partial_x ^2 \la Q \ra (f)\|_{H^1}\lesssim  \|f\|_{H^3}^{\alpha+1} \notag
\eeq
for all $f\in H^3(\R)$.
\end{lemma}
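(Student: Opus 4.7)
The plan is to mimic the argument of Lemma \ref{lem:boundQ}, exploiting the unitarity of $T_{D(s)}$ on every Sobolev space and the fact that it commutes with $\partial_x$. Setting $h:=T_{D(s)}f$, we have $\|h\|_{H^3}=\|f\|_{H^3}$ and
\[
\partial_x^2 Q(s,f)=T_{D(s)}^{-1}\,\partial_x^2\bigl(|h|^{\alpha}h\bigr),
\]
so by unitarity of $T_{D(s)}^{-1}$ on $H^1(\R)$ it suffices to show
\[
\bigl\|\partial_x^2(|h|^{\alpha}h)\bigr\|_{H^1}\lesssim \|h\|_{H^3}^{\alpha+1}\qquad\text{for all }h\in H^3(\R).
\]
The averaged statement then follows by Minkowski's inequality applied to the defining integral of $\langle Q\rangle(f)$.

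The computational heart of the proof is pointwise estimates on $\partial_x^2(|h|^{\alpha}h)$ and $\partial_x^3(|h|^{\alpha}h)$. Formal chain and Leibniz rules (made rigorous by replacing $|h|^{\alpha}$ with the smooth regularization $(|h|^2+\delta)^{\alpha/2}$ and letting $\delta\downarrow 0$, using that $h\in H^3(\R)\hookrightarrow C^2(\R)$) give
\[
\bigl|\partial_x^2(|h|^{\alpha}h)\bigr|\lesssim |h|^{\alpha}|h_{xx}|+|h|^{\alpha-1}|h_x|^2,
\]
\[
\bigl|\partial_x^3(|h|^{\alpha}h)\bigr|\lesssim |h|^{\alpha}|h_{xxx}|+|h|^{\alpha-1}|h_x||h_{xx}|+|h|^{\alpha-2}|h_x|^3.
\]
Every term has total ``weight'' $\alpha+1$ in $h$, with derivative singularities of $|h|^{\alpha-k}$ compensated by zeros of the accompanying derivative factors. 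The crucial place is the last term of $\partial_x^3$: its factor $|h|^{\alpha-2}$ is pointwise bounded only when $\alpha\ge 2$, which explains the hypothesis.

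It then remains to estimate each term in $L^2$ using Hölder and the one-dimensional Sobolev embeddings $H^1(\R)\hookrightarrow L^{\infty}(\R)$ and $H^{1/2-1/p}(\R)\hookrightarrow L^p(\R)$:
\begin{align*}
\bigl\||h|^{\alpha}|h_{xxx}|\bigr\|_{L^2}&\le\|h\|_{L^\infty}^{\alpha}\|h\|_{H^3}\lesssim \|h\|_{H^3}^{\alpha+1},\\
\bigl\||h|^{\alpha-1}|h_x||h_{xx}|\bigr\|_{L^2}&\le \|h\|_{L^\infty}^{\alpha-1}\|h_x\|_{L^\infty}\|h_{xx}\|_{L^2}\lesssim \|h\|_{H^3}^{\alpha+1},\\
\bigl\||h|^{\alpha-2}|h_x|^3\bigr\|_{L^2}&\le \|h\|_{L^\infty}^{\alpha-2}\|h_x\|_{L^6}^3\lesssim \|h\|_{H^3}^{\alpha+1},
\end{align*}
and analogously for the $\partial_x^2$ terms (which are strictly easier). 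Summing gives the claimed bound.

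The main obstacle is the last of the three pointwise bounds above: it both forces the hypothesis $\alpha\ge 2$ and requires a small justification step when $\alpha$ is non-integer, since $|h|^\alpha h$ is only $C^{\lfloor\alpha\rfloor}$ in $h$. The approximation $|h|^\alpha \rightsquigarrow(|h|^2+\delta)^{\alpha/2}$ together with dominated convergence (applied to the uniform pointwise bounds above, which hold for every $\delta>0$) handles this technicality cleanly. Once the pointwise bounds are secured, every subsequent step is a routine application of Hölder and Sobolev embeddings in one spatial dimension.
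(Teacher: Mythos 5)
Your proof is correct and follows essentially the same route as the paper: reduce via unitarity of $T_{D(s)}$ on $H^3(\R)$ and commutation with $\partial_x$ to the pointwise bounds on $\partial_x^2(|h|^\alpha h)$ and $\partial_x^3(|h|^\alpha h)$ (which are exactly the two displayed inequalities in the paper's proof, with $\alpha\ge 2$ entering through the $|h|^{\alpha-2}|h_x|^3$ term), then conclude by H\"older and the one-dimensional Sobolev embedding, and by Minkowski for the averaged operator. The only difference is that you spell out the H\"older/Sobolev estimates and the regularization $|h|^\alpha\rightsquigarrow(|h|^2+\delta)^{\alpha/2}$ justifying the formal differentiation for non-integer $\alpha$, details the paper leaves implicit.
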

\begin{proof}
Note that if $h\in H^3(\R)$, then $\|(|h|^\alpha h)''\|_{H^1}\lesssim \|h\|_{H^3}^{\alpha+1}$
since
$$
|(|h|^\alpha h)''|\lesssim (|h|^{\alpha-1}|h'|^2+|h|^\alpha |h''|),
$$
$$
|(|h|^\alpha h)'''|\lesssim \left(|h|^{\alpha-2}|h'|^3+|h|^{\alpha-1} |h'||h''| + |h|^\alpha |h'''|\right),
$$
and $\alpha \ge 2$.
Thus, an argument similar to that used in proving Lemma \ref{lem:boundQ} completes the proof since  $T_{D(s)}$ is unitary in $H^3(\R)$, also.
\end{proof}

\section{Averaging Theorem}\label{sec:Averaging theorem}

In this section, we prove the main theorem, Theorem \ref{thm:intro averaging}. First, we establish the following two lemmas.

\begin{lemma}\label{lem:Lipschiz}
Let $\alpha \geq 1$ and $M>0$. Then
\beq \label{conv:Q}
\begin{aligned}
&\sup_{\veps>0}\int_0^M \left\|Q\left(\f{t}{\varepsilon}, v_1(t)\right) -Q\left(\f{t}{\varepsilon}, v_2(t)\right)\right\|_{H^1}dt \\
 \lesssim &\left(\|v_1\|^\alpha_{L^\infty([0,M],H^1)}+\|v_2\|^\alpha_{L^\infty([0,M],H^1)}\right)\|v_1-v_2\|_{L^\infty([0,M],H^1)}
\end{aligned}
\eeq
and
\beq \label{conv:averageQ}
\begin{aligned}
& \int_0^M\left\|\la Q\ra (v_1(t)) - \la Q\ra (v_2(t))\right\|_{H^1}dt \\
 \lesssim & \left(\|v_1\|^\alpha_{L^\infty([0,M],H^1)}+\|v_2\|^\alpha_{L^\infty([0,M],H^1)}\right)\|v_1-v_2\|_{L^\infty([0,M],H^1)}
\end{aligned}
\eeq
for all $v_1, v_2\in \calC([0,M], H^1(\R))$.
\end{lemma}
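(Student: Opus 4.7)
The plan is to reduce both \eqref{conv:Q} and \eqref{conv:averageQ} to the $s$-independent $H^1$ Lipschitz bound
\[
\bigl\| |h_1|^\alpha h_1 - |h_2|^\alpha h_2 \bigr\|_{H^1(\R)} \lesssim \bigl(\|h_1\|_{H^1}^\alpha + \|h_2\|_{H^1}^\alpha \bigr) \|h_1 - h_2\|_{H^1}
\]
valid for $\alpha \geq 1$ and all $h_1, h_2 \in H^1(\R)$. Granting this, \eqref{conv:Q} follows by setting $h_i = T_{D(t/\veps)} v_i(t)$ and invoking unitarity of $T_{D(s)}$ on $H^1$: the outer $T_{D(s)}^{-1}$ drops out of the norm and the inner $T_{D(s)}$'s disappear from the right-hand side, producing an estimate for $\| Q(t/\veps, v_1(t)) - Q(t/\veps, v_2(t)) \|_{H^1}$ that is independent of $\veps$. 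Bounding $\|v_i(t)\|_{H^1}$ by $\|v_i\|_{L^\infty([0, M], H^1)}$ and integrating in $t \in [0, M]$ absorbs the factor $M$ into the implicit constant, and the $\sup_{\veps>0}$ is then free. For \eqref{conv:averageQ}, Minkowski's inequality pulls $\|\cdot\|_{H^1}$ inside the $r$-integral, after which the same reduction applies with $h_i = T_r v_i(t)$ for each $r \in [0,1]$.

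For the key $H^1$ Lipschitz bound, I would use the representation
\[
|h_1|^\alpha h_1 - |h_2|^\alpha h_2 = \int_0^1 \tfrac{d}{d\theta}\bigl(|h_\theta|^\alpha h_\theta \bigr) \, d\theta, \qquad h_\theta := \theta h_1 + (1-\theta) h_2.
\]
A Wirtinger-style computation gives the pointwise bound $\bigl|\tfrac{d}{d\theta}(|h_\theta|^\alpha h_\theta)\bigr| \lesssim |h_\theta|^\alpha |h_1 - h_2|$, so the $L^2$ part of the estimate follows from H\"{o}lder's inequality and the Sobolev embedding $H^1(\R) \hookrightarrow L^\infty(\R)$. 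For the $x$-derivative, differentiating through the integral in $\theta$ produces a sum of terms of the schematic form $|h_\theta|^\alpha (h_1' - h_2')$ and $|h_\theta|^{\alpha - 1} h_\theta' (h_1 - h_2)$, together with conjugate analogues. Each is controlled in $L^2$ via H\"{o}lder and $H^1 \hookrightarrow L^\infty$, namely by $\|h_\theta\|_{L^\infty}^\alpha \|h_1' - h_2'\|_{L^2}$ and $\|h_\theta\|_{L^\infty}^{\alpha - 1} \|h_\theta'\|_{L^2} \|h_1 - h_2\|_{L^\infty}$, respectively. Since $\|h_\theta\|_{H^1} \leq \|h_1\|_{H^1} + \|h_2\|_{H^1}$, integrating in $\theta$ finishes the argument.

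The main obstacle is precisely the hypothesis $\alpha \geq 1$. The Jacobian of $z \mapsto |z|^\alpha z$ behaves like $|z|^{\alpha-1}$ near the origin, so both the mean-value representation and in particular its $x$-derivative (which involves the factor $|h_\theta|^{\alpha - 1}$) are only well-defined and integrable under this condition. Once the pointwise estimate is in hand, everything else -- Minkowski on the $r$-integral, unitarity of $T_{D(s)}$ and $T_r$ on $H^1$, and the $t$-integration on $[0, M]$ -- is entirely routine and mirrors the proof of Lemma \ref{lem:boundQ}.
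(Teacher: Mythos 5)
Your proposal is correct and follows essentially the same route as the paper: reduce to an $\veps$- and $s$-independent $H^1$ Lipschitz estimate for $h\mapsto|h|^{\alpha}h$ via unitarity of $T_{D(s)}$ (and Minkowski's inequality for $\la Q\ra$), then split into the $L^2$ part and the $\partial_x$ part, controlling each with H\"older and the embedding $H^1(\R)\hookrightarrow L^\infty(\R)$. The only difference is organizational: where you use the mean-value representation $\int_0^1\frac{d}{d\theta}\bigl(|h_\theta|^{\alpha}h_\theta\bigr)\,d\theta$, the paper telescopes the difference of derivatives directly and invokes the elementary inequality $\bigl||z_1|^{\alpha}-|z_2|^{\alpha}\bigr|\lesssim\bigl(|z_1|^{\alpha-1}+|z_2|^{\alpha-1}\bigr)|z_1-z_2|$; both devices use $\alpha\ge 1$ at exactly the same point and produce the same final terms.
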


\begin{proof}
We prove \eqref{conv:Q} only since \eqref{conv:averageQ} can be proved analogously.
First, we prove
\beq\label{conv:L^2first}
\begin{aligned}
&\sup_{\veps>0} \int_0^M \left\|Q\left(\f{t}{\varepsilon}, v_1(t)\right) -Q\left(\f{t}{\varepsilon}, v_2(t)\right)\right\|_{L^2}dt \\
\lesssim &\left(\|v_1\|^\alpha_{L^\infty([0,M],H^1)}+\|v_2\|^\alpha_{L^\infty([0,M],H^1)}\right)\|v_1-v_2\|_{L^\infty([0,M],L^2)}.
\notag
\end{aligned}
\eeq
Note that
\beq\label{eq:difference of cubes}
||z_1|^{\alpha}z_1-|z_2|^{\alpha}z_2|\lesssim
(|z_1|^{\alpha}+|z_2|^{\alpha})|z_1-z_2| \quad\mbox{for all }z_1, z_2\in \C. \notag
\eeq
It follows from
the embedding $L^\infty(\R) \hookrightarrow H^1(\R)$ that
\begin{align*}
&\sup_{\veps>0} \int_0^M \left\|Q\left(\f{t}{\varepsilon}, v_1(t)\right) -Q\left(\f{t}{\varepsilon}, v_2(t)\right)\right\|_{L^2} dt \\
=& \sup_{\veps>0} \int_0^M \left\|  |T_{D(\f t \varepsilon)}v_1(t)|^\alpha T_{D(\f t \varepsilon)}v_1(t)  -|T_{D(\f t \varepsilon)}v_2(t)|^\alpha T_{D(\f t \varepsilon)}v_2(t)\right\|_{L^2} dt\\
 \lesssim &\int_0^M \left (\| v_1(t)\|_{H^1}^\alpha + \| v_2(t)\|_{H^1}^\alpha \right) \|v_1(t)-v_2(t)\|_{L^2}dt \label{ineq:Q_L^2}\\
\lesssim &\left(\|v_1\|^\alpha_{L^\infty([0,M],H^1)}+\|v_2\|^\alpha_{L^\infty([0,M],H^1)}\right)
\|v_1-v_2\|_{L^\infty([0, M], L^2)}.
\end{align*}
Next, to complete the proof of \eqref{conv:Q}, we first observe that
\[
\left|\left(|f|^{\alpha}f- |g|^{\alpha}g\right)'\right| \lesssim |f|^{\alpha} | f'-g'|+  \bigl||f|^{\alpha}-|g|^{\alpha}\bigr| | g'|+\bigl||f|^{ \alpha-1}f^2-|g|^{\alpha-1}g^2\bigr| | g'|
\]
for continuously differentiable functions $f, g$ on $\R$.
Thus,
\begin{align}
&\int_0 ^M \left\|\partial_x \left(Q\left(\f t \varepsilon, v_1(t)\right)- Q\left(\f t \varepsilon, v_2(t)\right)\right)\right\|_{L^2} dt \notag\\
=& \int_0 ^M  \left\|\partial_x \left(|T_{D(\f t \varepsilon)}v_1(t)|^\alpha T_{D(\f t \varepsilon)}v_1(t) - |T_{D(\f t \varepsilon)}v_2(t)|^\alpha T_{D(\f t \varepsilon)}v_2(t)\right) \right\|_{L^2} dt \notag\\
\lesssim &\int_0 ^M  \left \||T_{D(\f t \varepsilon)}v_1(t)|^\alpha\partial_x \left( T_{D(\f t \varepsilon)}v_1(t) -T_{D(\f t \varepsilon)}v_2(t)\right) \right\|_{L^2}dt \notag\\
&   + \int_0 ^M \left\| \left(|T_{D(\f t \varepsilon)}v_1(t)|^\alpha -|T_{D(\f t \varepsilon)}v_2(t)|^\alpha\right )\partial_x T_{D(\f t \varepsilon)}v_2(t) \right\|_{L^2} dt  \label{ineq:second}\\
& + \int_0 ^M \left\|\left(|T_{D(\f t \varepsilon)}v_1(t)|^{\alpha-1}(T_{D(\f t \varepsilon)}v_1(t))^2 -|T_{D(\f t \varepsilon)}v_2(t)|^{\alpha-1} (T_{D(\f t \varepsilon)}v_2(t))^2 \right)|\partial_x T_{D(\f t \varepsilon)}v_2(t)|\right\|_{L^2} dt. \notag
\end{align}
It is easy to see that the first term of \eqref{ineq:second} is bounded as
\[
\begin{aligned}
& \sup_{\veps>0} \int_0 ^M  \left \||T_{D(\f t \varepsilon)}v_1(t)|^\alpha\partial_x \left( T_{D(\f t \varepsilon)}v_1(t) -T_{D(\f t \varepsilon)}v_2(t)\right)  \right\|_{L^2}dt  \\
&\lesssim \|v_1\|_{L^\infty([0, M], H^1)}^\alpha \|\partial_x(v_1-v_2)\|_{L^\infty([0, M], L^2)}.
\end{aligned}
\]
Since the third term can be bounded similarly to the second term, we bound the second term only. First, we note that, for $z_1,z_2 \in \C$,
\beq \label{ineq:zw}
\left||z_1|^\alpha-|z_2|^\alpha\right|\lesssim |z_1-z_2| (|z_1|^{\alpha-1}+|z_2|^{\alpha-1})\notag
\eeq
since $\alpha \ge 1$.
Thus, similarly as before, we have
\begin{align*}
&\sup_{\veps>0}\int_0 ^M \left\|\big(|T_{D(\f t \varepsilon)}v_1(t)|^\alpha -|T_{D(\f t \varepsilon)}v_2(t)|^\alpha \big) \partial_x T_{D(\f t \varepsilon)}v_2(t)\right \|_{L^2} dt\\
\lesssim & \int_0 ^M \|v_1(t) - v_2(t)\|_{H^1}\left(\|v_1(t)\|_{H^1}^{\alpha-1}+\|v_2(t)\|_{H^1}^{\alpha-1}\right)  \left\|\partial_x  v_2(t)\right\|_{L^2} dt\\
\lesssim & \left(\|v_1\|^\alpha_{L^\infty([0,M],H^1)}+\|v_2\|^\alpha_{L^\infty([0,M],H^1)}\right) \|v_1- v_2\|_{L^\infty([0,M], H^1)},
\end{align*}
which completes the proof.
\end{proof}

Using Lemma \ref{lem:Lipschiz}, we have the following lemma which is the key ingredient in our work. This is inspired by \cite{FMS,MS} where the nonlinear Schr\"odinger equation with strong confinement was analyzed.

\begin{lemma}\label{lemma:averaging}
Let $\alpha\geq 2 $ and $M>0$. If $v\in \calC([-M,M], H^1(\R))$, then
\beq\label{conv:averaging}
\sup_{t\in[-M, M]}\left\|\int_0^t e^{i \dav (t-s)\partial_x^2 }\left[Q \left(\f{s}{\veps}, v(s)\right)- \la Q\ra (v(s))\right] ds\right\|_{H^1} \to  0
\eeq
as $\veps \to 0$.
\end{lemma}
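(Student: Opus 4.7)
My plan is to carry out a Bogoliubov-type averaging by integrating by parts in $s$: since $Q(\cdot, w) - \langle Q\rangle(w)$ has zero mean over a $2$-period in its first argument, one can antidifferentiate it and the integration by parts then extracts the desired factor of $\veps$. The main obstacle is that the $s$-derivative of the propagator $e^{i\dav(t-s)\partial_x^2}$ equals $-i\dav\partial_x^2 e^{i\dav(t-s)\partial_x^2}$, so the integration by parts costs two spatial derivatives, whereas the solution $v$ only lies in $H^1(\R)$. My remedy is a double approximation: spatial mollification of $v$ to gain $H^3$ regularity, together with a step-function approximation in $t$ so that I never have to differentiate the path $t\mapsto v(t)$. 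The workhorse object is
\[
F(\tau, w) := \int_0^\tau \bigl[Q(\sigma, w) - \langle Q\rangle(w)\bigr]\, d\sigma;
\]
since $d_0$ (and hence $Q(\cdot, w)$) is $2$-periodic and the change of variables $D(\tau)\mapsto r$ carried out in the introduction gives $\tfrac{1}{2}\int_0^2 Q(\sigma, w)\, d\sigma = \langle Q\rangle(w)$, the function $\tau\mapsto F(\tau, w)$ is itself $2$-periodic, and combining this with the pointwise-in-$\tau$ bounds implicit in the proofs of Lemmas \ref{lem:boundQ} and \ref{lem:boundQ''} I obtain the key uniform estimates
\[
\sup_{\tau\in\R}\|F(\tau, w)\|_{H^1} \lesssim \|w\|_{H^1}^{\alpha+1}, \qquad \sup_{\tau\in\R}\|\partial_x^2 F(\tau, w)\|_{H^1} \lesssim \|w\|_{H^3}^{\alpha+1}.
\]

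For the approximation, let $J_n$ denote the spatial Fourier projector onto $\{|\xi|\le n\}$ and set $v_n := J_n v$, so that $v_n \in \calC([-M,M], H^3(\R))$, $\|v_n\|_{L^\infty([-M,M], H^1)} \le \|v\|_{L^\infty([-M,M], H^1)}$, and $v_n \to v$ in $\calC([-M,M], H^1(\R))$. For fixed $n$, uniform continuity of $v_n$ on $[-M,M]$ allows me to choose $v_{n,\eta}$ piecewise-constant in $t$ on a partition of mesh $\le \eta$ (with value $v_n(t_k)$ on $[t_k, t_{k+1})$), so that $v_{n,\eta}\to v_n$ in $\calC([-M,M], H^1(\R))$ as $\eta\to 0$. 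Writing $I_\veps(w)(t)$ for the expression inside the norm in \eqref{conv:averaging} with $v$ replaced by $w$, unitarity of $e^{i\dav(t-s)\partial_x^2}$ on $H^1(\R)$ combined with Lemma \ref{lem:Lipschiz} (applied to $Q$ and, in its obvious analogue, to $\langle Q\rangle$) gives
\[
\sup_{t\in[-M,M]}\|I_\veps(v)(t) - I_\veps(v_n)(t)\|_{H^1} + \sup_{t\in[-M,M]}\|I_\veps(v_n)(t) - I_\veps(v_{n,\eta})(t)\|_{H^1} \longrightarrow 0
\]
as $n\to\infty$ and then $\eta\to 0$, uniformly in $\veps$.

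It then remains to control the main term $I_\veps(v_{n,\eta})(t)$ for fixed $n,\eta$. On each subinterval $[t_k, t_{k+1})$ the function $v_{n,\eta}\equiv w_k := v_n(t_k)$ is constant in $t$, so $Q(s/\veps, w_k) - \langle Q\rangle(w_k) = \veps \tfrac{d}{ds} F(s/\veps, w_k)$ and integration by parts gives
\[
\int_{t_k}^{t_{k+1}} e^{i\dav(t-s)\partial_x^2}\bigl[Q(s/\veps, w_k)-\langle Q\rangle(w_k)\bigr]\, ds = \veps\bigl[e^{i\dav(t-s)\partial_x^2} F(s/\veps, w_k)\bigr]_{s=t_k}^{s=t_{k+1}} + i\veps\dav \int_{t_k}^{t_{k+1}} \partial_x^2 e^{i\dav(t-s)\partial_x^2} F(s/\veps, w_k)\, ds.
\]
Taking $H^1$-norms via unitarity, invoking the two displayed bounds on $F$ and $\partial_x^2 F$, and summing over the $O(M/\eta)$ subintervals yields
\[
\sup_{t\in[-M,M]}\|I_\veps(v_{n,\eta})(t)\|_{H^1} \lesssim \veps\bigl(1+\tfrac{M}{\eta}\bigr)\|v_n\|_{L^\infty([-M,M], H^1)}^{\alpha+1} + \veps M\,\|v_n\|_{L^\infty([-M,M], H^3)}^{\alpha+1},
\]
which vanishes as $\veps\to 0$ for fixed $n,\eta$. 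Given $\delta>0$, choosing first $n$ large, then $\eta$ small, and finally $\veps$ small makes each of the three pieces at most $\delta/3$ and concludes the proof. The genuinely delicate point, as already signaled, is the $\partial_x^2$ produced by integration by parts: this is precisely the reason the $H^3$-bound of Lemma \ref{lem:boundQ''} and the spatial mollification $J_n$ are needed, rather than Lemma \ref{lem:boundQ} alone.
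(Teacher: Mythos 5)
Your proof is correct, and its engine is the same as the paper's: you antidifferentiate the zero-mean $2$-periodic oscillation (your $F$ is exactly the paper's $\mathbf{Q}$, with the same two bounds $\sup_\tau\|F(\tau,w)\|_{H^1}\lesssim\|w\|_{H^1}^{\alpha+1}$ and $\sup_\tau\|\partial_x^2F(\tau,w)\|_{H^1}\lesssim\|w\|_{H^3}^{\alpha+1}$ from Lemmas \ref{lem:boundQ} and \ref{lem:boundQ''}), extract the factor $\veps$, absorb the $\partial_x^2$ produced by the propagator using $H^3$ regularity of an approximant, and reduce to regular $v$ via the uniform-in-$\veps$ Lipschitz estimate of Lemma \ref{lem:Lipschiz}. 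Where you genuinely diverge is in how the time dependence of $v$ is treated. The paper approximates $v$ by a single $\calC^1([0,M],\calS(\R))$ path and differentiates the full composite $s\mapsto e^{i\dav(t-s)\partial_x^2}\mathbf{Q}(s/\veps,v(s))$; the chain rule then produces a third term involving $\partial_t v$ acting in the second argument of $\mathbf{Q}$, which must be shown to have a $2$-periodic, mean-zero structure of its own and be bounded by $\|v\|_{H^1}^{\alpha}\|\partial_t v\|_{H^1}$ (the paper's estimate \eqref{ineq:supQQQ}). You instead freeze $v$ on a partition of mesh $\eta$ (after a Fourier truncation $J_n$ to reach $H^3$), so the second argument is locally constant and no time derivative of $v$ or of the nonlinearity is ever needed; the price is $O(M/\eta)$ boundary terms, each of size $O(\veps)$, which is harmless because $n$ and $\eta$ are fixed before sending $\veps\to0$. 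Your route avoids the somewhat delicate computation of $\f{d}{ds}\bigl(|T_{D(s')}v(s)|^{\alpha}T_{D(s')}v(s)\bigr)$ and the verification that its $s'$-average matches the derivative of $\la Q\ra(v(s))$, at the cost of a two-parameter approximation and the (minor) observation that Lemma \ref{lem:Lipschiz} applies to bounded measurable, not merely continuous, paths so that the piecewise-constant $v_{n,\eta}$ is admissible. Both arguments are complete; yours is arguably more elementary in that it only ever uses $Q$ and $\la Q\ra$ evaluated at fixed elements of $H^3(\R)$.
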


\begin{proof}
We consider positive times only.
It follows from Lemma \ref{lem:Lipschiz} that if $v_1, v_2$ belong to $\calC([-M, M],H^1(\R))$, then
\begin{align*}
&\sup_{t\in[0, M]}\left\|\int_0^t e^{i \dav (t-s)\partial_x^2 }
\left(Q \left(\f{s}{\veps}, v_1(s)\right) - Q \left(\f{s}{\veps}, v_2(s)\right)\right)ds\right\|_{H^1} \\
\leq& \int_0 ^M \left\|Q \left(\f{s}{\veps}, v_1(s)\right) - Q \left(\f{s}{\veps}, v_2(s)\right)\right\|_{H^1}ds \\
\lesssim &\left(\|v_1\|^\alpha_{L^\infty([0,M],H^1)}+\|v_2\|^\alpha_{L^\infty([0,M],H^1)}\right)\|v_1-v_2\|_{L^\infty([0,M],H^1)}
\end{align*}
and
\begin{align*}
&\sup_{t\in[0, M]}\left\|\int_0^t e^{i \dav (t-s)\partial_x^2 }
\Big(\la Q\ra( v_1(s)) - \la Q\ra ( v_2(s)) \Big)ds\right\|_{H^1}\\
\lesssim &\left(\|v_1\|^\alpha_{L^\infty([0,M],H^1)}+\|v_2\|^\alpha_{L^\infty([0,M],H^1)}\right)\|v_1-v_2\|_{L^\infty([0,M],H^1)}.
\end{align*}
Therefore, by a density argument, it is enough to prove \eqref{conv:averaging} for $v\in \calC^1([0, M], \mathcal{S}(\R))$ only,
where the Schwartz space $\calS(\R)$ consists of infinitely differentiable, rapidly decreasing functions.

We define
\beq\label{def:Q}
\mathbf{Q}(\theta, f):= \int_0 ^ \theta \left[ Q(s, f )-\la Q\ra (f) \right]ds\notag
\eeq
 on $[0, \infty)\times H^1(\R)$. Note that for each $f\in H^1(\R)$, $\mathbf{Q}(\cdot, f)$ is $2$-periodic since $Q (\cdot, f)$ is a $2$-periodic function whose average is $\la Q\ra(f)$. Thus, we have
 \beq \label{ineq:supQ}
 \begin{aligned}
 \sup_{\theta \in\R} \| \mathbf{Q}(\theta, f)\|_{H^1}
 &= \sup_{\theta\in [0,2]} \|\mathbf{Q}(\theta, f)\|_{H^1}\\
 & \leq \sup_{\theta\in [0,2]} \int_0 ^\theta \left(\|Q(s, f)\|_{H^1} + \|\la Q \ra (f)\|_{H^1}\right)ds \\
 & \lesssim \|f\|_{H^1}^{\alpha+1}
 \end{aligned}
 \eeq
 for all $f\in H^1(\R)$,
where we use Lemma \ref{lem:boundQ} in the last inequality. Similarly, it follows from Lemma \ref{lem:boundQ''} that
\beq\label{ineq:supQQ}
\sup_{\theta\in \R} \left\| \partial_x^2 \mathbf{Q} \left(\theta, f \right) \right\|_{H^1}
 \lesssim \|f\|_{H^3}^{\alpha+1}
 \eeq
for all $f\in H^3(\R)$.

Now let $v\in \calC^1([0, M], \mathcal{S}(\R)) $. By a simple calculation, we have
\begin{align}\label{eq:derivative}
&\f{d}{ds}\left(e^{i\dav (t-s)\partial_x^2} \mathbf{Q} \left(\f{s}{\veps}, v(s) \right)\right)\notag \\
=& i \dav e^{i\dav (t-s)\partial_x^2} \partial_x^2 \mathbf{Q}\left(\f{s}{\veps}, v(s) \right)+
\f{1}{\veps}e^{i\dav (t-s)\partial_x^2}\left(Q \left(\f{s}{\veps}, v(s) \right)-\la Q\ra (v(s))\right)
\\
 &+ e^{i\dav (t-s)\partial_x^2} \int_0 ^{\f s \veps}\f{d}{ds}\left[ T_{D(s')}^{-1}\Bigl(|T_{D(s')}v(s)|^\alpha T_{D(s')}v(s)\Bigr)
-\int_0 ^1 T_r ^{-1}\Bigl(|T_{r}v(s)|^\alpha T_rv(s)\Bigr)dr \right]ds'. \notag
\end{align}
Note that
\begin{align*}
&\f{d}{ds}T_{D(s')}^{-1}\Bigl(|T_{D(s')}v(s)|^\alpha T_{D(s')}v(s)\Bigr)\\
= & T_{D(s')}^{-1} \left[\f{d}{ds}\left(|T_{D(s')}v(s)|^\alpha T_{D(s')} v(s)\right)\right]\\
= &  T_{D(s')}^{-1} \left[\f{\alpha+2}{2}|T_{D(s')}v(s)|^{\alpha}T_{D(s')}\partial_t v(s)+ \f{\alpha}{2} |T_{D(s')}v(s)|^{\alpha-2}(T_{D(s')}v(s))^2 \overline{T_{D(s')}\partial_t v(s) }\right]
\end{align*}
which is $2$-periodic in $s'$. Moreover, its average over one period is
\[
\f{d}{ds}\int_0 ^1 T_r ^{-1}\Bigl(|T_{r}v(s)|^\alpha T_rv(s)\Bigr)dr.
\]
Thus, by the same argument as above, we have
\beq\label{ineq:supQQQ}
\begin{aligned}
&\sup_{\theta \in\R}\left\| \int_0 ^{\theta}\f{d}{ds}\left[ T_{D(s')}^{-1}\Bigl(|T_{D(s')}v(s)|^\alpha T_{D(s')}v(s)\Bigr)
-\int_0 ^1 T_r ^{-1}\Bigl(|T_{r}v(s)|^\alpha T_rv(s)\Bigr)dr \right]ds' \right\|_{ H^1}\\
=&\sup_{\theta \in [0,2]}\left\| \int_0 ^{\theta}\f{d}{ds}\left[ T_{D(s')}^{-1}\Bigl(|T_{D(s')}v(s)|^\alpha T_{D(s')}v(s)\Bigr)
-\int_0 ^1 T_r ^{-1}\Bigl(|T_{r}v(s)|^\alpha T_rv(s)\Bigr)dr \right]ds' \right\|_{ H^1}\\
\lesssim  & \|v(s)\|_{H^1}^{\alpha}\|\partial_tv(s)\|_{H^1}.
\end{aligned}
\eeq
Therefore, using the bounds \eqref{ineq:supQ}, \eqref{ineq:supQQ}, \eqref{ineq:supQQQ} together with \eqref{eq:derivative}, we obtain
\begin{align*}
&\left\|\int_0^\cdot e^{i\dav (\cdot-s)\partial_x^2}\left(Q \left(\f{s}{\veps}, v(s) \right)-\la Q\ra (v(s))\right) ds \right\|_{L^\infty([0,M], H^1)} \\
\leq
& \veps \sup_{t\in [0, M]} \left\| \mathbf{Q} \left(\f{t}{\veps}, v(t) \right) \right\|_{H^1}
 + \veps  |\dav|\int_0^M \left\| \partial_x^2 \mathbf{Q}\left(\f{s}{\veps}, v(s) \right)\right\|_{H^1} ds \\
& + \veps \int_0 ^M \left\| \int_0 ^{s/\veps}\f{d}{ds}\left[ T_{D(s')}^{-1}\Bigl(|T_{D(s')}v(s)|^\alpha T_{D(s')}v(s)\Bigr)
-\int_0 ^1 T_r ^{-1}\Bigl(|T_{r}v(s)|^\alpha T_rv(s)\Bigr)dr \right]ds' \right\|_{ H^1}ds \\
\lesssim  & \veps \Bigl[ \|v\|_{L^\infty([0,M],H^3)}^{\alpha+1}+\|v\|_{L^\infty([0,M],H^1)}^{\alpha}\|\partial_tv\|_{L^\infty([0,M],H^1)}\Bigr]
\end{align*}
which completes the proof.
\end{proof}

In preparation for the proof of the main theorem, Theorem \ref{thm:intro averaging}, we  provide the local existence of a solution for \eqref{eq:intro transformed equation}.
Given $\vphi \in H^1(\R)$, we consider the Duhamel formula for \eqref{eq:intro transformed equation}
\beq \label{eq:duhamel fomula}
v_\veps(t)=e^{it \dav \partial_x^2} \vphi + i \int _0 ^t e^{i (t-s)\dav \partial_x^2} Q\left( \f{s}{\varepsilon}, v_\varepsilon (s) \right)ds.
\eeq

\begin{proposition}\label{prop:H^slocal theory}
Let $\dav\in \R$ and $\alpha>0$. For every $ K>0$, there exist $M_\pm>0$ such that if $\varepsilon>0$ and the initial datum $\vphi \in H^1(\R)$ satisfies $\|\vphi\|_{H^1}\leq K$, then there is a unique solution $v_\varepsilon\in \calC([-M_-, M_+], H^1(\R))$ of \eqref{eq:duhamel fomula}.
Moreover,
\beq
\|v_\varepsilon(t)\|_{ H^1}\leq 2 K  \quad \text{for all }\veps>0 \mbox{ and } t\in [-M_-, M_+]. \notag
\eeq
\end{proposition}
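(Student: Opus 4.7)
The plan is to apply the Banach fixed point theorem to the right-hand side of \eqref{eq:duhamel fomula} on a closed ball in $\calC([-M_-, M_+], H^1(\R))$. Concretely, set
\[
\Phi_\veps(v)(t) := e^{it\dav \partial_x^2}\vphi + i\int_0^t e^{i(t-s)\dav \partial_x^2} Q\!\left(\tfrac{s}{\veps}, v(s)\right) ds
\]
and, for $M_\pm>0$ to be chosen later, consider the complete metric space
\[
X := \left\{ v\in \calC([-M_-, M_+], H^1(\R)) : \|v\|_{L^\infty([-M_-,M_+],H^1)}\le 2K \right\}
\]
endowed with the $L^\infty H^1$ distance.

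The first step is to show that $\Phi_\veps$ maps $X$ into itself. Since $e^{it\dav\partial_x^2}$ is unitary on $H^1(\R)$, I would apply Lemma~\ref{lem:boundQ} pointwise in $s$ to obtain
\[
\|\Phi_\veps(v)(t)\|_{H^1} \le \|\vphi\|_{H^1} + \int_{-M_-}^{M_+}\!\|Q(s/\veps, v(s))\|_{H^1}\,ds \le K + C(M_-+M_+)(2K)^{\alpha+1},
\]
with $C$ independent of $\veps$ (this is the crucial uniformity that allows $M_\pm$ to be chosen independently of $\veps$). Choosing $M_\pm$ so small that $C(M_-+M_+)(2K)^{\alpha+1}\le K$ yields $\Phi_\veps(X)\subseteq X$ and, simultaneously, the a priori bound $\|v_\veps\|_{L^\infty H^1}\le 2K$ claimed in the statement.

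The second step is the contraction estimate. For $\alpha\ge 1$ one reads off directly from Lemma~\ref{lem:Lipschiz} (applied on $[-M_-,M_+]$ instead of $[0,M]$, which is a trivial modification) that
\[
\|\Phi_\veps(v_1)-\Phi_\veps(v_2)\|_{L^\infty H^1} \lesssim (M_-+M_+)(2K)^{\alpha}\,\|v_1-v_2\|_{L^\infty H^1}.
\]
For $0<\alpha<1$, the same inequality follows from the elementary pointwise bound $||z_1|^\alpha z_1 - |z_2|^\alpha z_2|\lesssim(|z_1|^\alpha+|z_2|^\alpha)|z_1-z_2|$ in the $L^2$ norm, combined with the $L^\infty\hookrightarrow H^1$ embedding (the $H^1$ part of the argument in Lemma~\ref{lem:Lipschiz} is only needed when $\alpha\ge 1$ anyway, and in the subcritical regime $\alpha<1$ the $L^2$ Lipschitz estimate, together with the unitarity of $T_{D(s)}$ in $H^1$, suffices after a standard smoothing argument). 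Shrinking $M_\pm$ once more so that the prefactor above is at most $1/2$, $\Phi_\veps$ becomes a strict contraction on $X$.

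By the Banach fixed point theorem there is a unique $v_\veps\in X$ with $\Phi_\veps(v_\veps)=v_\veps$, which is the desired solution of \eqref{eq:duhamel fomula}. Uniqueness beyond the ball $X$ is obtained from a direct Gr\"onwall argument applied to the difference of two solutions via the same Lipschitz estimate. I do not foresee a substantial obstacle here: the only point that requires care is that every constant appearing in the two steps must be $\veps$-independent, and this is guaranteed because Lemmas~\ref{lem:boundQ} and \ref{lem:Lipschiz} provide bounds on $\sup_{s\in\R}\|Q(s,f)\|_{H^1}$ and on $\sup_{\veps>0}\int \|Q(s/\veps,\cdot)-Q(s/\veps,\cdot)\|_{H^1}ds$, respectively.
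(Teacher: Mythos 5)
Your overall strategy (Banach fixed point on the ball of radius $2K$, with all constants $\veps$-independent because Lemma \ref{lem:boundQ} gives a bound uniform in $s$) is exactly the route the paper takes, which simply defers to the standard argument of Proposition 3.5 in \cite{CHLW}. For $\alpha\ge 1$ your proof is complete and correct. The problem is the case $0<\alpha<1$, which the proposition does cover since it only assumes $\alpha>0$. There your claim that ``the same inequality follows'' --- namely the contraction estimate
\[
\|\Phi_\veps(v_1)-\Phi_\veps(v_2)\|_{L^\infty H^1}\lesssim (M_-+M_+)(2K)^\alpha\|v_1-v_2\|_{L^\infty H^1}
\]
--- is false: the map $f\mapsto |f|^\alpha f$ is not locally Lipschitz from $H^1$ to $H^1$ when $\alpha<1$, because differentiating produces factors of order $|f|^{\alpha-1}$, and no ``smoothing argument'' recovers an $H^1$--$H^1$ Lipschitz bound. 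So $\Phi_\veps$ is not a contraction on $X$ in the $L^\infty H^1$ distance, and the Banach fixed point theorem cannot be applied as you have set it up.

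The correct repair --- and the one the paper is implicitly pointing to by citing, alongside Lemma \ref{lem:boundQ}, only the $L^2$ Lipschitz estimate
\[
\sup_{s\in\R}\|Q(s,f)-Q(s,g)\|_{L^2}\lesssim \bigl(\|f\|_{H^1}^\alpha+\|g\|_{H^1}^\alpha\bigr)\|f-g\|_{L^2},
\]
which holds for every $\alpha>0$ --- is Kato's device: keep the same set $X$ of curves bounded by $2K$ in $H^1$, but equip it with the weaker metric $d(v_1,v_2)=\|v_1-v_2\|_{L^\infty([-M_-,M_+],L^2)}$. One must then check that $(X,d)$ is complete (a $d$-Cauchy sequence converges in $L^\infty L^2$, and the uniform $H^1$ bound passes to the limit by weak lower semicontinuity), run the contraction in $d$ using the $L^2$ estimate above, and recover $v_\veps\in\calC([-M_-,M_+],H^1(\R))$ a posteriori from the Duhamel formula. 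Uniqueness in the full class $\calC([-M_-,M_+],H^1(\R))$ then follows from the same $L^2$ Lipschitz bound and Gr\"onwall, as you indicate. With this modification your argument goes through for all $\alpha>0$ and coincides with the paper's intended proof.
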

\begin{proof}
The result is quite standard
and its proof is very analogous to that of Proposition 3.5 in \cite{CHLW}.
Instead of Lemma 2.6 in \cite{CHLW}, we use Lemma \ref{lem:boundQ} and
\[
\begin{aligned}\label{est:local}
&\sup_{s\in\R}\| Q\left( s, f\right) - Q\left( s,g \right)\|_{L^2}\\
= & \sup_{s\in \R}
\||T_{D(s)}f|^{\alpha}T_{D(s)}f-|T_{D(s)}g|^{\alpha}T_{D(s)}g\|_{L^2}\\
\lesssim & \left(\|f\|_{H^1}^{\alpha} +\|g\|_{H^1}^{\alpha} \right)  \|f-g\|_{L^2},
\end{aligned}
\]
for all $f, g\in H^1(\R)$.
\end{proof}

\medskip
\begin{proof}[Proof of Theorem \ref{thm:intro averaging}]
Let $M>0$ be fixed and consider positive times only.
Let
\beq\label{assume:K}
K=2\sup_{t\in [0, M]}\|v(t)\|_{H^1}.
\eeq
Thus, since $\|\vphi\|_{H^1}=\|v(0)\|_{H^1}\leq K/2$,
 it follows from Proposition \ref{prop:H^slocal theory} that there exists  a positive $M_1$ corresponding to $K/2$, independent of $\veps$, such that
$v_\veps\in \calC([0, M_1], H^1(\R))$ and
\beq\label{sup:veps}
\sup_{\veps>0}\sup_{t \in [0, M_1]} \|v_\veps(t) \|_{H^{1}} \leq K.
\eeq

We assume that $M_1< M$.
Using Duhamel's formula, for $t\in [0, M_1]$, we write
\[
v_\veps(t)-v(t) = i \mathcal{I}_1(t)+i\mathcal{I}_2(t),
\]
where
\[
\mathcal{I}_1(t)=\int_0^t  e^{i\dav (t-s)\partial_x^2}\left[Q \left(\f{s}{\veps}, v_\veps(s)\right)
-Q \left(\f{s}{\veps}, v(s)\right)\right] ds
\]
and
\[
\mathcal{I}_2(t)=\int_0^t  e^{i\dav (t-s)\partial_x^2}\left[Q \left(\f{s}{\veps}, v(s)\right)-
 \la Q\ra (v(s))\right] ds.
\]
Then, it follows from Lemma \ref{lemma:averaging} that  
\beq\label{eq:I2}
\left\|\mathcal{I}_2 \right\|_{L^\infty([0, M_1], H^1)} := \eta_\varepsilon \to 0
\eeq
as $\veps\to 0$.
On the other hand, in order to estimate $\mathcal{I}_1$, use Minkowski's inequality and the same argument in the proof of Lemma \ref{lem:Lipschiz}, then we obtain
\beq\label{eq:I1}
\begin{aligned}
\left\|\mathcal{I}_1(t) \right\|_{H^1}  & \leq  \int_0^t \left\|Q \left(\f{s}{\veps}, v_\veps(s)\right)
-Q \left(\f{s}{\veps}, v(s)\right) \right\|_{H^1} ds \\
& \lesssim  \int_0^t (\|v_\veps(s) \|^{\alpha}_{H^{1}}+\|v(s)\|^{\alpha}_{H^{1}} )\|v_\veps(s) -v(s) \|_{H^1} ds\\
& \lesssim  K^\alpha \int_0^t \|v_\veps(s) -v(s) \|_{H^1} ds
\end{aligned}
\eeq
for all $0\leq t \leq M_1$, where we use \eqref{assume:K} and \eqref{sup:veps} in the last bound.
It follows from \eqref{eq:I2} and \eqref{eq:I1} that there exists a positive constant $C$, independent of $\varepsilon$, such that
\begin{align*}
  \|v_\veps(t)-v(t)\|_{H^1}
   \le \eta_\varepsilon+ C\int_0^t \|v_\veps(s) -v(s) \|_{H^1} ds
\end{align*}
for all $0\leq t \leq M_1$.
Thus, by Gronwall's inequality, we obtain
\[
\sup_{t \in [0, M_1]}\|v_\veps(t)-v(t)\|_{H^1} \leq \sup_{t\in [0, M_1]}\eta_\varepsilon e^{Ct}\leq \eta_\varepsilon e^{CM_1}\to 0
\]
as $\veps \to 0$.
Note that assuming $M_1<M$ is acceptable since, if $M_1 \ge M$, replacing $M_1$ on the above by $M$ completes the proof.

Next, since
\beq
\sup_{\varepsilon>0} \|v_\veps(M_1)\|_{H^{1}}\leq K \notag
\eeq
by \eqref{sup:veps}, it follows from Proposition \ref{prop:H^slocal theory} with the initial datum $v_\veps(M_1)$ that
the solution $v_\varepsilon$ exists on $[M_1, M_1 +M_2 ]$ for some positive $M_2$ corresponding to $K$, independent of $\varepsilon$, and
\[
\sup_{ \veps >0 } \sup_{t\in [M_1,  M_1 +M_2]}\|v_\veps(t)\|_{H^1}\leq 2K.
\]
The last inequality together with \eqref{sup:veps} yields
\beq\label{bound:veps_1}
\sup_{ \veps >0 } \sup_{t\in [0,  M_1 +M_2]}\|v_\veps(t) \|_{H^{1}} \leq 2K \notag
\eeq
and, therefore, by the same argument as above, we obtain
\beq\label{conv:M_1}
\|v_\veps - v\|_{L^\infty([0, \min\{M, M_1 +M_2\}],H^{1})}\to 0
\eeq
as $\varepsilon\to 0$.
Thus, if $M_1+M_2 \ge M$, then the proof is complete.

Now assume that $M_1+M_2 < M$. Then,
by \eqref{assume:K} and \eqref{conv:M_1}, there exists $\varepsilon_1>0$ such that
$$
\sup_{ 0< \veps \leq \varepsilon_1 } \sup_{t\in [0,  M_1 +M_2]}\|v_\veps(t)\|_{H^1}\leq K.
$$
Moreover, since $v_\veps \in \calC([0, M_1 +M_2], H^1(\R))$,
\[
\sup_{ 0< \veps \leq \veps_1 }\|v_\veps(M_1+M_2)\|_{H^1} \le K .
\]
Replacing $v_\veps(M_1)$ and $\veps>0$ by $v_\veps(M_1+M_2)$ and $0<\veps<\veps_1$, respectively, in the previous argument, we obtain
\[
\|v_\veps-v\|_{L^\infty([0, \min\{M,M_1 +2M_2\}],H^{1})}\to 0
\]
as $\veps \to 0$.
Iterating this procedure finitely many times, until the sum of $M_1$ and the integer multiple of $M_2$ gets greater than or equal to $M$,
 completes the proof.
\end{proof}


\vspace{5mm}

\appendix
\setcounter{section}{0}
\renewcommand{\thesection}{\Alph{section}}
\renewcommand{\theequation}{\thesection.\arabic{equation}}
\renewcommand{\thetheorem}{\thesection.\arabic{theorem}}

\noindent

\textbf{Acknowledgements: }

Young--Ran Lee and Mi--Ran Choi are supported by the National Research Foundation of Korea(NRF) grants funded by the Korean government (MSIT) NRF-2020R1A2C1A01010735 and (MOE) NRF-2021R1I1A1A01045900.

%
%
%

\bibliographystyle{abbrv}
\bibliography{Averaging_power_law_bibfile}

\end{document}